\documentclass[11pt]{amsart}

\newif\ifdraft
\draftfalse

\usepackage[T1]{fontenc}
\usepackage[utf8]{inputenc}
\usepackage[margin=1.06in]{geometry}
\usepackage{amsmath,amsthm,amssymb,mathtools}
\usepackage[sc]{mathpazo}
\usepackage{microtype}
\usepackage{enumitem}
\usepackage{graphicx}
\usepackage{xcolor}
\usepackage[hidelinks]{hyperref}

\ifdraft
  \usepackage{showlabels}
  \usepackage{todonotes}
\else
  \newcommand{\todo}[2][]{}
\fi

\setlist[enumerate]{leftmargin=2.15em,itemsep=.25ex,topsep=.55ex}
\setlist[itemize]{leftmargin=1.85em,itemsep=.25ex,topsep=.55ex}
\allowdisplaybreaks

\theoremstyle{plain}
\newtheorem{theorem}{Theorem}[section]
\newtheorem{lemma}[theorem]{Lemma}
\newtheorem{proposition}[theorem]{Proposition}

\newtheorem{fact}[theorem]{Fact}
\newtheorem{claim}[theorem]{Claim}

\newtheorem*{theorem*}{Theorem}
\newtheorem*{lemma*}{Lemma}
\newtheorem*{proposition*}{Proposition}
\newtheorem*{corollary*}{Corollary}
\newtheorem*{claim*}{Claim}

\theoremstyle{definition}
\newtheorem{definition}[theorem]{Definition}

\newtheorem{remark}[theorem]{Remark}

\newenvironment{claimproof}[1][\proofname]
  {\proof[#1]}
  {\endproof}

\newcounter{step}

\newcommand{\FF}{\mathbb{F}}

\DeclareMathOperator{\aut}{Aut}

\newcommand{\forkindep}[1][]{%
  \mathrel{%
    \mathop{%
      \vcenter{%
        \hbox{\oalign{\noalign{\kern-.3ex}\hfil$\vert$\hfil\cr
              \noalign{\kern-.7ex}
              $\smile$\cr\noalign{\kern-.3ex}}}%
      }%
    }\displaylimits_{#1}%
  }%
}

\title{An automorphism tower of length $\kappa^+$}

\author{Yatir Halevi}
\address{Faculty of Mathematics, Technion - Israel Institute of Technology, Haifa, Israel}
\email{yatirh@math.technion.ac.il}

\author{Itay Kaplan}
\address{Einstein Institute of Mathematics, Hebrew University of Jerusalem, 91904, Jerusalem
Israel}
\email{kaplan@math.huji.ac.il}

\thanks{The first author was supported by The Israel Science Foundation (grant No. 295/26). The second author would like to thank the Israel Science Foundation for partial support of this
research (Grant no. 804/22).}

\date{}

\begin{document}

\begin{abstract}
For any infinite cardinal $\kappa$, we give an example of a centerless group $G$ of cardinality $\kappa$ whose automorphism tower terminates after $\kappa^+$ steps, answering a question of Simon Thomas. 
\end{abstract}

\maketitle

\section{Introduction and Preliminaries}

A centerless group $G$ embeds into its automorphism group $\aut(G)$ by mapping each element to its corresponding inner automorphism, thus identifying $G$ with $\mathrm{Inn}(G)\leq \aut(G)$. Since $\aut(G)$ is again a centerless group (\cite[Lemma 1.1.1]{ThomasBook}), we may construct the  automorphism tower: $G_0=G$, $G_{\alpha+1}=\aut(G_\alpha)$ and if $\alpha$ is a limit ordinal then $G_\alpha=\bigcup_{\beta<\alpha} G_\beta$. Thomas \cite{Thomas,thomasII} has shown that if $G$ is infinite then this procedure terminates after less than $(2^{|G|})^+$ steps, i.e. there is $\alpha< (2^{|G|})^+$ for which $G_\alpha=G_{\alpha+1}$.\footnote{In \cite{Thomas}, Thomas showed weak inequality and this was later improved independently by himself and Felgner, see \cite{thomasII} and \cite[Corollary 3.3.2]{ThomasBook}.} Later, \cite{KaplanShelah882} presented a simplified proof of Thomas' theorem (without using the axiom of Choice). To complete the historical picture, \cite{Wielandt} proved that if $G$ is finite, then the tower ends in finitely many steps (and his proof involves much deeper group theory). It is also worth mentioning that this problem can be formulated for groups with center where the maps are now homomorphisms, and indeed \cite{Hamkins} showed that even in this case the tower terminates, though there is no explicit bound. 
See \cite{ThomasBook} for more information on the history of the problem and earlier results.

For a centerless group $G$, let $\tau(G)$ be the minimal ordinal $\alpha$ such that $G_\alpha=G_{\alpha +1}$. In \cite{Thomas}, Thomas proved that for every cardinal $\kappa$ and ordinal $\alpha< \kappa^+$ there exists a centerless group $G$ with $|G|\leq \kappa$ and $\tau(G)=\alpha$.  In \cite[Question 1.9]{thomasII}, Thomas asked whether there exists an infinite centerless group $G$ with $\tau(G)\geq |G|^+$. This was answered positively (at least consistently) by Just, Thomas and Shelah in \cite{JuShTh} for uncountable cardinalities, but the question for countable groups remained open (see also \cite[Question 3.1.16]{ThomasBook}). We answer the original question in the affirmative (in ZFC). 

\begin{theorem*}[Theorem \ref{T:main}]
For any infinite cardinal $\kappa$, there exists a centerless group $G$ of cardinality $\kappa$ with $\tau(G)=\kappa^+$.
\end{theorem*}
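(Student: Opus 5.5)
The plan is to reduce the computation of the automorphism tower to iterated normalizers inside a fixed complete group. I would take the ambient group $H=\mathrm{Sym}(\kappa)$ and consider subgroups $G\le H$ containing the finitary alternating group $\mathrm{Alt}(\kappa)$.

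\textbf{Step 1: the rigidity lemma.} I would prove that if $\mathrm{Alt}(\kappa)\le K\le\mathrm{Sym}(\kappa)$, then $K$ is centerless and $\aut(K)=N_{\mathrm{Sym}(\kappa)}(K)$, acting by conjugation.
\begin{itemize}
\item Centerlessness follows because $C_{\mathrm{Sym}(\kappa)}(\mathrm{Alt}(\kappa))=1$.
\item For the automorphisms, $\mathrm{Alt}(\kappa)$ is characteristic in $K$: it is the unique minimal nontrivial normal subgroup, or can be isolated via the conjugacy class of $3$-cycles.
\item Every automorphism of $\mathrm{Alt}(\kappa)$ is induced by conjugation in $\mathrm{Sym}(\kappa)$. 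This is classical: $3$-cycles are characterized group-theoretically, hence preserved, hence points are preserved.
\item Since the centralizer of $\mathrm{Alt}(\kappa)$ is trivial, an automorphism of $K$ is determined by its restriction to $\mathrm{Alt}(\kappa)$. It therefore agrees with conjugation by some $h\in\mathrm{Sym}(\kappa)$, and this $h$ must normalize $K$.
\end{itemize}
By transfinite induction, with unions at limit stages, this gives $G_\alpha=N^\alpha(G)$ for every $\alpha$. Here $N^0(G)=G$, $N^{\alpha+1}(G)=N_{\mathrm{Sym}(\kappa)}(N^\alpha(G))$, and unions are taken at limits. Thus $\tau(G)$ equals the length of the normalizer tower of $G$ in $\mathrm{Sym}(\kappa)$.

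\textbf{Step 2: the combinatorial construction.} I need $G$ with $\mathrm{Alt}(\kappa)\le G$ and $|G|=\kappa$ whose normalizer tower is strictly increasing for exactly $\kappa^+$ steps. Note that $N^{\kappa^+}(G)$ will then have size at least $\kappa^+$, so the tower necessarily grows beyond $\kappa$ only at the end.

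My plan is to endow $\kappa$ with a structure $\mathcal M$ of size $\kappa$, for instance a family of finite or small "labels" organized along a rank function. I would then let $G$ be generated by $\mathrm{Alt}(\kappa)$ together with $\kappa$ permutations that preserve $\mathcal M$ exactly. The aim is to arrange that $N^\alpha(G)$ is precisely the group of permutations preserving $\mathcal M$ "up to errors of rank $<\alpha$", or of preserving the coarsened structure obtained by forgetting the first $\alpha$ levels.

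The natural device is a well-founded hierarchy indexed by ordinals $<\kappa^+$. The difficulty is that a single structure on $\kappa$ only carries $\kappa$ many ordinals. To handle this, I would code, for each $\alpha<\kappa^+$, a bijection of $\alpha$ with $\kappa$, and let the relevant equivalence relations be defined uniformly. Thomas's construction for each $\alpha<\kappa^+$ is the template, and the task is to glue these constructions in a way that is \emph{independent} of $\alpha$. One option is to use almost-disjoint pieces of $\kappa$, each carrying Thomas's tower for one $\alpha$; but there are $\kappa^+$ values of $\alpha$ and only $\kappa$ points, so the pieces would instead have to be generated dynamically by the normalizers themselves.

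\textbf{Step 3: verification.} At each stage I must show two things:
\begin{itemize}
\item $N^{\alpha+1}(G)\supsetneq N^\alpha(G)$, by exhibiting a permutation that normalizes $N^\alpha(G)$ but lies outside it;
\item the explicit description of $N^\alpha(G)$ persists through limits, which requires a continuity argument that the normalizer of the union is computed level by level.
\end{itemize}
Finally, I must check that at $\kappa^+$ the tower stops, so that $N^{\kappa^+}(G)$ is self-normalizing.

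The main obstacle is Step 2 together with the limit stages of Step 3. One needs a single structure on a set of size $\kappa$ whose coarsenings form a strictly increasing chain of length exactly $\kappa^+$. Normalizers tend either to jump to all of $\mathrm{Sym}(\kappa)$ or to stabilize early, and controlling the limit stages, where unions of normalizers need not be described by any single "rank" condition, is where I expect the real work to be. What I would try first is to make $N^\alpha(G)$ the stabilizer of an equivalence relation $E_\alpha$ modulo finite sets, with $E_\alpha$ coarsening as $\alpha$ grows and $\kappa^+$ many distinct stages arising from a $\kappa^+$-length almost-decreasing sequence of structures.
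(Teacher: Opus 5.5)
Your Step 1 is sound and classical: $\mathrm{Alt}(\kappa)$ is the unique minimal normal subgroup of any $K$ with $\mathrm{Alt}(\kappa)\le K\le\mathrm{Sym}(\kappa)$, and $\aut(\mathrm{Alt}(\kappa))=\mathrm{Sym}(\kappa)$. So it is a legitimate variant of the reduction the paper uses. But it only trades the theorem for an equally hard problem: find $G\supseteq\mathrm{Alt}(\kappa)$ of size $\kappa$ whose normalizer tower in $\mathrm{Sym}(\kappa)$ is strictly increasing for exactly $\kappa^+$ steps.

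Steps 2 and 3 do not solve that problem. There is no structure $\mathcal M$, no choice of generators, and no formula for $N^\alpha(G)$. Hence there is no proof that $N^\alpha(G)\subsetneq N^{\alpha+1}(G)$ for $\alpha<\kappa^+$, and no proof that $N^{\kappa^+}(G)$ is self-normalizing. You locate the obstacle correctly (gluing all $\alpha<\kappa^+$ with only $\kappa$ points, and controlling limits), but the proposal stops there, so the theorem is not proved.

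Termination at $\kappa^+$ is not automatic in your setting either. A priori such a tower is only bounded below $(2^\kappa)^+$. An element normalizing $\bigcup_{\alpha<\kappa^+}N^\alpha(G)$ need not normalize any single stage, so you would need a structural reason for stopping.

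The paper supplies the missing ideas as follows.
\begin{enumerate}
\item The gluing is done by Hartley's module. Take $V$ over $\FF_2$ with basis $\{v_\alpha:\alpha<\kappa^+\}$, and let the free group on $\{e_i:i<\kappa\}$ act by $e_iv_\alpha=v_\alpha+v_{f_\alpha(i)}$, where $f_\alpha:\kappa\to\alpha$ is a surjection. Surjectivity of $f_\beta$ gives, for $\beta>\alpha$, some $i$ with $(e_i-1)v_\beta=v_\alpha\notin V_\alpha$. A leading-term version of this shows that the series $L_{\alpha+1}=\{x:(e_i-1)x\in L_\alpha\ \forall i\}$ equals $\mathrm{Span}\{v_\beta:\beta<\alpha\}$ for all $\alpha<\kappa^+$.
\item Such a series is a normalizer tower. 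For abelian $N$ and $S$-invariant $N_\alpha$, one has $N_{N\rtimes S}(N_\alpha\rtimes S)=N_{\alpha+1}\rtimes S$.
\item Termination is then immediate. Membership in $N_{\alpha+1}$ is decided by the $\le\kappa$ conditions $\sigma(x)x^{-1}\in N_\alpha$ for $\sigma\in S$, so regularity of $\kappa^+$ gives $N_{\kappa^++1}=N_{\kappa^+}$.
\item $V$ is embedded equivariantly into the closed abelian group $(2^\kappa)^{F_\kappa}$ with the Bernoulli shift. This makes $N\rtimes S$ a closed subgroup of $\mathrm{Sym}(\kappa)$, so the Just--Shelah--Thomas fact applies.
\end{enumerate}
That fact allows the ambient group to be an arbitrary closed subgroup of $\mathrm{Sym}(\kappa)$. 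Your reduction forces the ambient group to be $\mathrm{Sym}(\kappa)$ itself. The normalizer in $\mathrm{Sym}(\kappa)$ of $\mathrm{Alt}(\kappa)(N_\alpha\rtimes S)$ is not controlled by the semidirect-product computation, so even the paper's example would not plug directly into your framework.
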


In \cite{Sh:810}, Shelah proved (in ZFC) that if $\kappa$ is  strong limit singular of uncountable cofinality then there is a centerless group $G$ of cardinality $\kappa$ with $\tau(G)\geq 2^\kappa$. For general infinite cardinals $\kappa$, we do not have an example of a centerless group of cardinality $\kappa$ with $\tau(G)>\kappa^+$.

\subsection*{AI Statement.}  ChatGPT Sol 5.6 was asked whether it has an example of a countable centerless group $G$ with $\tau(G)=\omega_1$. This article grew out of its answer.

The original example that it gave was more complicated than the one we present here. 
The idea for considering the $S$-normalizer tower (see Definition \ref{D:tower}) was isolated from this original example. 
We dug further using ChatGPT and found that this technique was essentially already used to construct an example in a different field and under a different guise.

The paper itself was written by us.

\section{The Main Construction}
It seems that producing groups whose automorphism tower terminates after a prescribed number of steps is quite hard. In \cite{Thomas} (and \cite{JuShTh}), a procedure to create such a group was given by answering a different problem: Finding a group together with a subgroup whose normalizer tower terminates after a certain number of steps.

Let $G$ be any group and $H_0\leq G$ a subgroup. The normalizer tower of $H_0$ inside $G$ is defined in the following way: \[H_{\alpha+1}=N_G(H_\alpha)=\{a\in G: aH_\alpha a^{-1}=H_\alpha\}\] and if $\alpha$ is a limit ordinal then $H_\alpha=\bigcup_{\beta<\alpha} H_\beta$. 

We fix an infinite cardinal $\kappa$. Consider the symmetric group $\mathrm{Sym}(\kappa)$ endowed with the pointwise topology. 
\cite[Lemma 1.8]{JuShTh} gives:\footnote{\cite[Lemma 1.8]{JuShTh} is stated for automorphism groups of graphs of cardinality at most $\kappa$, but this can be translated to automorphism groups of first-order structures of cardinality $\kappa$ and whose language has cardinality at most $\kappa$, i.e., closed subgroups of $\mathrm{Sym}(\kappa)$. See, for example, \cite[Theorem 4.1.9]{ThomasBook}.}

\begin{fact}\label{F: normalizer to automorphism}
Let $G$ be a closed subgroup of $\mathrm{Sym}(\kappa)$ and $H_0\leq G$ a subgroup of cardinality $\leq \kappa$. If the normalizer tower of $H_0$ inside $G$ terminates after $\alpha$ steps then there exists a  centerless group $L$ of cardinality $\kappa$ with $\tau(L)=\alpha$. 
\end{fact}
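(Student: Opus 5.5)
The plan follows Thomas's original strategy. First I encode $G$ as the full automorphism group of a suitable rigid algebraic object. Then I build a centerless group $T$ of cardinality $\kappa$ that is \emph{complete relative to} $G$, in the sense that $\aut(T)=T\rtimes G$. Finally I show that the automorphism tower of $L=T\rtimes H_0$ is exactly $T\rtimes H_\beta$ at every stage $\beta$.

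\textbf{Step 1 (encoding).} Since $G$ is closed in $\mathrm{Sym}(\kappa)$, $G=\aut(M)$ for a first-order structure $M$ on $\kappa$ in a language of size $\le\kappa$. By the standard coding, $M$ can be replaced by a graph $\Gamma$ of cardinality $\kappa$ with $\aut(\Gamma)\cong G$, and the isomorphism respects the actions.

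From $\Gamma$ I construct a field $F=F_\Gamma$ of cardinality $\kappa$ with $\aut(F)\cong\aut(\Gamma)=G$, using a Fried--Kollár style construction. Starting from the purely transcendental field $\mathbb{Q}(x_v:v\in\Gamma)$, one adjoins algebraic elements (roots of rigid polynomials) that mark the vertices $x_v$ and the edges $x_v+x_w$ for $vw\in E(\Gamma)$. Because these gadgets are rigid, every field automorphism must permute the $x_v$ via a graph automorphism. I expect this to be the main obstacle: proving that \emph{every} automorphism of $F$ comes from $\aut(\Gamma)$ requires a careful argument about which elements have roots of prescribed degrees, and must be done uniformly for arbitrary $\kappa$. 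I would try to quote the known construction (as \cite{Thomas} and \cite{JuShTh} do) rather than redo it.

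\textbf{Step 2 (the group $T$).} Let $T=\mathrm{PSL}(2,F)$. This group is simple, centerless, and of cardinality $\kappa$. By the classical description of automorphisms of $\mathrm{PSL}(2,F)$, $\aut(T)=\mathrm{P\Gamma L}(2,F)=\mathrm{PGL}(2,F)\rtimes\aut(F)$. To obtain a clean semidirect product $T\rtimes G$, I would arrange that $F$ is quadratically closed, or at least that $F^{*}=(F^{*})^2$. Then $\mathrm{PGL}(2,F)=\mathrm{PSL}(2,F)$ and $\aut(T)=T\rtimes G$, with $G$ acting on matrix entries. The field construction in Step 1 has to accommodate this condition, which adds to the difficulty there.

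\textbf{Step 3 (the tower).} For any $H\le G$, put $L_H=T\rtimes H\le\aut(T)$. The claim is that $\aut(L_H)=L_{N_G(H)}$.

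First, $L_H$ is centerless, because $C_{\aut(T)}(T)=1$. Next, $T$ is characteristic in $L_H$. Indeed, $T$ is a nonabelian simple normal subgroup with trivial centralizer in $L_H$, so every nontrivial normal subgroup $N$ of $L_H$ meets $T$ nontrivially (since $[N,T]\le N\cap T$), and hence contains $T$. Thus $T$ is the unique minimal normal subgroup.

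Consequently every $\varphi\in\aut(L_H)$ restricts to an automorphism of $T$, that is, to conjugation by some $g\in\aut(T)$. Since $C_{L_H}(T)=1$, $\varphi$ is itself conjugation by $g$ on all of $L_H$. So $\aut(L_H)\cong N_{T\rtimes G}(T\rtimes H)$, and this normalizer equals $T\rtimes N_G(H)$ because $T$ is normal.

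By induction on $\beta$, taking unions at limit stages, the automorphism tower of $L=L_{H_0}$ satisfies $L_\beta=T\rtimes H_\beta$, with the identifications compatible with the embeddings. Hence $L_\beta=L_{\beta+1}$ if and only if $H_\beta=H_{\beta+1}$, so $\tau(L)=\alpha$. Finally, $|L|=|T|\cdot|H_0|=\kappa$.
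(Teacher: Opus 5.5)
Your Step~3 is sound, and your overall route (graph $\to$ field $F$ with $\aut(F)\cong G$ $\to$ projective linear group over $F$) is the argument behind \cite[Lemma 1.8]{JuShTh}, which the paper simply cites without proof. The gap is in Step~2. To get $\aut(T)=T\rtimes G$ with $T=\mathrm{PSL}(2,F)$, you need a field $F$ with $|F|=\kappa$, $\aut(F)\cong G$ \emph{and} $F^*=(F^*)^2$. Nothing you propose to quote supplies the last condition. Fried--Koll\'ar-type constructions typically pin down automorphisms by controlling which elements have roots of prescribed degrees. The obvious ways of making every element a square are the quadratic closure, or the perfect closure in characteristic $2$. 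These in general add new automorphisms (the Galois group over the original field, resp.\ Frobenius), which destroys $\aut(F)\cong G$. So as written, your proof rests on an unproved and nontrivial field-theoretic lemma. Nor can the condition simply be dropped from your setup. If $\mathrm{PGL}(2,F)\neq\mathrm{PSL}(2,F)$, then $\aut(\mathrm{PSL}(2,F)\rtimes H)=N_{\mathrm{P\Gamma L}(2,F)}(\mathrm{PSL}(2,F)\rtimes H)$ in general contains elements of $\mathrm{PGL}(2,F)\setminus\mathrm{PSL}(2,F)$. Then the stages of the tower are no longer of the form $\mathrm{PSL}(2,F)\rtimes H_\beta$, and the step count is not controlled.

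The condition is in fact unnecessary, and the standard argument avoids it. Work inside $\mathrm{P\Gamma L}(2,F)=\mathrm{PGL}(2,F)\rtimes\aut(F)=\aut(\mathrm{PSL}(2,F))$, and put $L_H=\mathrm{PGL}(2,F)\rtimes H$ instead of $\mathrm{PSL}(2,F)\rtimes H$. Since $\mathrm{PSL}(2,F)$ is simple with trivial centralizer in $\mathrm{P\Gamma L}(2,F)$, your minimal-normal-subgroup argument shows that $\mathrm{PSL}(2,F)$ is characteristic in $L_H$. So every $\varphi\in\aut(L_H)$ agrees on $\mathrm{PSL}(2,F)$ with conjugation by some $g\in\mathrm{P\Gamma L}(2,F)$. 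Comparing $\varphi(x)$ and $gxg^{-1}$ on $\mathrm{PSL}(2,F)$ gives $\varphi=c_g$ and $g\in N_{\mathrm{P\Gamma L}(2,F)}(L_H)$; here you need the trivial centralizer in $\mathrm{P\Gamma L}(2,F)$, not merely in $L_H$. Hence $\aut(L_H)=N_{\mathrm{P\Gamma L}(2,F)}(L_H)=\mathrm{PGL}(2,F)\rtimes N_G(H)$, because $\mathrm{PGL}(2,F)\trianglelefteq\mathrm{P\Gamma L}(2,F)$. Your induction in Step~3 then goes through verbatim. Step~1 then only needs the plain Fried--Koll\'ar statement: a field $F$ with $|F|=\kappa$ and $\aut(F)\cong\aut(\Gamma)$.
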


The example we give is basically a normalizer tower inside a semidirect product. In order to ease the notation, we first translate this into the language of group actions. 

We start with the following
\begin{fact}\label{F:semidirect of nonarchPolish}
    Let $S$ be a discrete group of cardinality $\kappa$ and $N$ a closed subgroup of $\mathrm{Sym}(\kappa)$ with $S$ acting continuously by automorphisms on $N$. Then the topological group $N\rtimes S$ is isomorphic to a closed subgroup of $\mathrm{Sym}(\kappa)$. 
\end{fact}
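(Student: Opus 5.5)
The plan is to realize $N\rtimes S$ as the automorphism group of a first-order structure of cardinality $\kappa$ in a language of size at most $\kappa$. Equivalently, we make it act faithfully on a set of size $\kappa$ so that the image is closed and the induced topology agrees with the product topology. Here the product topology on $N\rtimes S$ is the one in which $S$ is discrete and $N$ carries the pointwise topology inherited from $\mathrm{Sym}(\kappa)$.

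Let $X$ be the set of size $\kappa$ on which $N$ acts. Take the set $\Omega = X\times S$, which has cardinality $\kappa$. We would like the natural action on the coset space $(N\rtimes S)/\mathrm{Stab}$, so we work concretely.
\begin{itemize}
\item An element $s_0\in S$ acts by $s_0\cdot(x,s)=(x,s_0s)$.
\item An element $n\in N$ acts by $n\cdot(x,s)=(s^{-1}(n)(x),s)$, where $s^{-1}(n)$ denotes the image of $n$ under the automorphism $s^{-1}$.
\end{itemize}
So on the fibre $X\times\{s\}$, the element $n$ acts via $s^{-1}(n)$. Checking the semidirect product relation $s_0 n s_0^{-1}=s_0(n)$ on $\Omega$ is a routine computation. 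It gives a homomorphism $\rho\colon N\rtimes S\to\mathrm{Sym}(\Omega)$.

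Faithfulness follows by looking at the fibre index and at the fibre $X\times\{1\}$. The fibre index recovers the $S$-component, and the action on $X\times\{1\}$ then recovers $n$.

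For continuity, consider a basic neighbourhood of $\rho(n s_0)$, determined by finitely many points $(x_i,s_i)$. Since $S$ is discrete, we may fix $s_0$. Moving $n$ within a small neighbourhood keeps the finitely many values $s_i^{-1}(n)(x_i)$ fixed. This uses continuity of each automorphism $s_i^{-1}$ of $N$, which is part of the hypothesis that $S$ acts continuously.

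For openness onto the image, a basic neighbourhood of $n$ in $N$ is determined by finitely many points of $X$. It is matched by fixing the images of the corresponding points in the fibre $X\times\{1\}$ together with the single point $(x_0,1)$, which pins down the $S$-coordinate. Hence $\rho$ is a topological isomorphism onto its image.

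The main obstacle is closedness of the image. Let $g\in\mathrm{Sym}(\Omega)$ be a limit of a net $\rho(n_\alpha s_\alpha)$.
\begin{itemize}
\item Evaluating at the single point $(x_0,1)$ shows that $s_\alpha$ is eventually constant, equal to some $s_0$, since the fibre index is discrete.
\item Then $\rho(n_\alpha)\to g\rho(s_0)^{-1}$. Restricting to the fibre $X\times\{1\}$, the permutations $n_\alpha$ converge pointwise to some $h$, and the same holds for their inverses. So $h\in N$, because $N$ is closed in $\mathrm{Sym}(X)$.
\item On the other fibres we need $s^{-1}(n_\alpha)\to s^{-1}(h)$, which is again continuity of each automorphism.
\end{itemize}
Hence $g=\rho(h s_0)$, and the image is closed.

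The delicate point is exactly that continuity of the action only gives continuity of each $s\mapsto$ automorphism individually. This suffices because every neighbourhood involves finitely many fibres. Alternatively, one can phrase closedness via the Polish or complete-metric characterization: $N\rtimes S$ is a topological group whose identity has a neighbourhood basis of open subgroups, and it is complete in the two-sided uniformity, so any faithful open-subgroup action yields a closed image.
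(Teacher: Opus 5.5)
Your proof is correct, but it takes a different route from the paper. The paper builds no action. It invokes the characterization from \cite{ClosedofSym}: a topological group of weight $\leq\kappa$ is isomorphic to a closed subgroup of $\mathrm{Sym}(\kappa)$ if and only if it is non-archimedean and Ra\u{i}kov complete. It then checks the three hypotheses directly: the weight bound, the open subgroups $V_t\rtimes\{e\}$, and convergence of Cauchy nets.

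You instead write down an explicit faithful permutation representation of $N\rtimes S$ on $\kappa\times S$. On $N$ this is the map $n\mapsto(s\mapsto s^{-1}(n))$ into $N^S$, the same device as the Bernoulli-shift embedding in Lemma \ref{L:enlarge to polish}. You then verify by hand that it is a homeomorphism onto a closed image.

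The heart of your closedness argument matches the heart of the paper's completeness argument. Discreteness of $S$ freezes the $S$-coordinate along the net. Continuity of each individual automorphism then transports convergence from the fibre over $1$ to every other fibre, and closedness of $N$ finishes. What your version buys is self-containment and a concrete $\kappa$-element set on which the group acts. The paper's version is shorter, but only modulo the cited characterization theorem.

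Two minor points. First, in the continuity step the values that must stay fixed at the points $(x_i,s_i)$ are $(s_0s_i)^{-1}(n)(x_i)$, not $s_i^{-1}(n)(x_i)$. The argument is unaffected. Second, your closing ``alternative'' should appeal to Ra\u{i}kov completeness (two-sided uniform completeness via nets), not to a Polish or complete-metric characterization, since the latter is only available for countable $\kappa$. With that correction, the alternative is exactly the paper's proof.
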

\begin{proof}
    We use the following characterization of closed subgroups of $\mathrm{Sym}(\kappa)$, taken from \cite[Corollary 4.9]{ClosedofSym}: A topological group $G$ of weight $\lambda$ is isomorphic to a closed subgroup of $\mathrm{Sym}(\lambda)$ if and only if it is non-archimedean and Ra\u{i}kov complete. We give the definitions:
    \begin{list}{$\bullet$}{}
        \item The weight of a topological group is the least cardinality of a basis for its topology.
        \item A topological group is non-archimedean if it has a basis of the identity consisting of open subgroups.
        \item A topological group is Ra\u{i}kov complete if every net $\{x_\sigma\}_{\sigma\in \Sigma}$ satisfying that for every neighborhood of the identity $U\ni e$ there is $\sigma_0\in \Sigma$ such that for every $\sigma,\sigma'\geq \sigma_0$ both $x_\sigma x_{\sigma'}^{-1}$ and $x_{\sigma}^{-1}x_{\sigma'}$ are in $U$ is convergent.
    \end{list}

    Note that since we may embed $\mathrm{Sym}(\lambda)$ as a closed subgroup of $\mathrm{Sym}(\kappa)$ when $\lambda\leq \kappa$, \cite[Corollary 4.9]{ClosedofSym} can be relaxed to: A topological group $G$ of weight $\leq \kappa$ is isomorphic to a closed subgroup of $\mathrm{Sym}(\kappa)$ if and only if it is non-archimedean and Ra\u{i}kov complete.

    With the product topology, continuity of the action makes $N\rtimes S$ a topological group. 
    The weight of $S$ is $\kappa$ and the weight of $N$ is at most $\kappa$ since it is a closed subgroup of $\mathrm{Sym}(\kappa)$, so the weight of $N\rtimes S$ is at most $\kappa$.

    It is known that the semidirect product of two non-archimedean groups is again non-archimedean \cite[Corollary 3.4]{MeSh}. and that the semidirect product of two Ra\u{i}kov complete topological groups is again Ra\u{i}kov complete \cite[Theorems 12,17]{raikov}, nevertheless we add the proof for the sake of completeness.

    Let $\{V_t:t\in T\}$ be a basis of the identity of $e\in N$ consisting of open subgroups. Then $\{V_t\rtimes \{e\}:t\in T\}$ witnesses that $N\rtimes S$ is non-archimedean.
    
    We now show that $N\rtimes S$ is Ra\u{i}kov complete. Let $\{x_\sigma=(n_\sigma,s_\sigma)\}_{\sigma\in \Sigma}$ be a net in $N\rtimes S$ satisfying the assumption; we show it is convergent. Since $S$ is discrete, and hence Ra\u{i}kov complete, we have that eventually $s_\sigma=s$ for some $s\in S$, so we may assume $s_\sigma=s$ for all $\sigma$.  Thus we know that for every open neighborhood of the identity $U\times \{e\}$, both
 \[x_\sigma^{-1}x_{\sigma'}=(s^{-1}(n_\sigma^{-1}n_{\sigma'}),e) \text{ and}\]
    \[x_{\sigma}x_{\sigma'}^{-1}=(n_\sigma n_{\sigma'}^{-1},e)\] are eventually in $U\times\{e\}$. Consequently, as the action of $S$ is continuous, for any open neighborhood of the identity $e\in V\subseteq N$, eventually both  $n_\sigma^{-1}n_{\sigma'}$ and $n_\sigma n_{\sigma'}^{-1}$ are in $V$. As $N$ is  Ra\u{i}kov complete, $\{n_\sigma\}_{\sigma\in \Sigma}$ converges to some element $n\in N$, so $x_\sigma$ converges to $(n,s)$.
\end{proof}

\begin{lemma}\label{L: one step}
    Let $S$ and $N$ be groups with $S$ acting by automorphisms on $N$. Let $N_0\trianglelefteq N$ be a normal subgroup which is $S$-invariant.
    \begin{enumerate}
        \item $N_1=\{x\in N: \forall \sigma\in S,\, \sigma(x)x^{-1}\in N_0\}$ is an $S$-invariant subgroup of $N$ containing $N_0$.
        \item For $G=N\rtimes S$, $N_G(N_0\rtimes S)=N_1\rtimes S$.
    \end{enumerate}
\end{lemma}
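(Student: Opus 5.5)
The plan is to verify both parts by direct computation, fixing the semidirect product convention $(n,s)(m,t)=(n\,s(m),st)$. Under this convention $(n,s)^{-1}=(s^{-1}(n^{-1}),s^{-1})$, and $N$ and $S$ sit inside $G$ as $(n,e)$ and $(e,s)$, with $(e,s)(m,e)(e,s)^{-1}=(s(m),e)$.

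For part (1), write $c_\sigma(x)=\sigma(x)x^{-1}$. We have $c_\sigma(x)=e\in N_0$ for $x\in N_0$, and indeed $\sigma(x)x^{-1}\in N_0$ because $N_0$ is $S$-invariant; hence $N_0\subseteq N_1$.

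\emph{Closure under products.} The cocycle-type identity
\[c_\sigma(xy)=\sigma(x)\sigma(y)y^{-1}x^{-1}=\sigma(x)\,c_\sigma(y)\,\sigma(x)^{-1}\cdot c_\sigma(x)\]
shows that $c_\sigma(xy)\in N_0$, using only normality of $N_0$ in $N$.

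\emph{Closure under inverses.} We have $c_\sigma(x^{-1})=\sigma(x)^{-1}x=\sigma(x)^{-1}\,c_\sigma(x)^{-1}\,\sigma(x)\in N_0$.

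\emph{$S$-invariance.} For $\tau\in S$,
\[c_\sigma(\tau(x))=\sigma\tau(x)\,\tau(x)^{-1}=\bigl(\sigma\tau(x)x^{-1}\bigr)\bigl(\tau(x)x^{-1}\bigr)^{-1}=c_{\sigma\tau}(x)\,c_\tau(x)^{-1}\in N_0.\]

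For part (2), set $H=N_0\rtimes S$. This is a subgroup of $G$ because $N_0$ is $S$-invariant, and $S\le H$, so $S\le N_G(H)$. Every element of $G$ factors as $(n,e)(e,s)$, so an element $(n,s)$ normalizes $H$ if and only if $(n,e)$ does. The claim therefore reduces to showing that for $x\in N$,
\[x\text{ normalizes }H\iff x\in N_1.\]
Since $N_1$ is $S$-invariant by part (1), $N_1\rtimes S$ is a subgroup, and the reduction then gives the equality.

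To analyze $x$ normalizing $H$, observe that $x$ normalizes $N_0$ automatically, because $N_0\trianglelefteq N$. Hence $xHx^{-1}=H$ holds exactly when $x s x^{-1}\in H$ and $x^{-1}sx\in H$ for all $s\in S$. Now compute
\[x s x^{-1}=(x,s)(s^{-1}(x^{-1}),e)\ \text{-ish}=(x\,s(x^{-1}),s),\]
i.e.\ $(x,e)(e,s)(x^{-1},e)=(x\,s(x)^{-1},s)$. This lies in $H$ iff $x\,s(x)^{-1}\in N_0$, iff its inverse $s(x)x^{-1}\in N_0$. So $xSx^{-1}\subseteq H$ is equivalent to $x\in N_1$.

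For the converse direction, if $x\in N_1$ then $x^{-1}\in N_1$ as well, since $N_1$ is a subgroup. Thus $x^{-1}Sx\subseteq H$ too, and both inclusions $xHx^{-1}\subseteq H$ and $x^{-1}Hx\subseteq H$ hold, giving equality.

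The main obstacle is keeping the order and inverse conventions straight in these computations; no step is conceptually hard. The only real inputs are normality of $N_0$ in $N$, used to absorb the conjugate $\sigma(x)c_\sigma(y)\sigma(x)^{-1}$, and $S$-invariance of $N_0$.
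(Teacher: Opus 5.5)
Your proof is correct. It differs from the paper's mainly in part (2).

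The paper does not compute in $G$ directly. It uses $S$-invariance of $N_0$ to get $N_0\trianglelefteq G$ and $G/N_0\cong (N/N_0)\rtimes S$. It then invokes the general identity $A\cap N_{A\rtimes B}(B)=C_A(B)$ to obtain $N_{G/N_0}(S)=C_{N/N_0}(S)\rtimes S$, notes that $C_{N/N_0}(S)=N_1/N_0$ by definition, and lifts back to $G$.

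Your argument is in effect a hand proof of that identity carried out modulo $N_0$. You reduce to $x\in N$ using $S\le H$, note that $x$ normalizes $N_0$, and compute $(x,e)(e,s)(x^{-1},e)=(x\,s(x)^{-1},s)$. Closure of $N_1$ under inverses then gives both $xSx^{-1}\subseteq H$ and $x^{-1}Sx\subseteq H$.

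The two routes trade off as follows:
\begin{itemize}
\item The paper's route is shorter and more conceptual. It exhibits $N_1/N_0$ as the fixed points $(N/N_0)^S$, which is the viewpoint of Remark~\ref{R: remarks on invariants tower}(4). The cost is that it leaves implicit both the identity and the correspondence $N_{G/N_0}\bigl((N_0\rtimes S)/N_0\bigr)=N_G(N_0\rtimes S)/N_0$.
\item Your route is self-contained and shows exactly where normality and $S$-invariance of $N_0$ enter.
\end{itemize}

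In part (1), you supply the cocycle computations that the paper compresses into ``since $N_0$ is normal, $N_1$ is a subgroup''. Your identity $c_\sigma(\tau(x))=c_{\sigma\tau}(x)\,c_\tau(x)^{-1}$ also gives $S$-invariance of $N_1$ without using $S$-invariance of $N_0$. The paper instead applies $\tau^{-1}$ and uses $\tau$-invariance of $N_0$ to reduce to $(\tau^{-1}\sigma\tau)(x)x^{-1}\in N_0$.

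Two cosmetic slips should be fixed:
\begin{itemize}
\item $c_\sigma(x)$ is not $e$ for $x\in N_0$. Your second clause gives the right reason, so drop the first.
\item The intermediate product $(x,s)(s^{-1}(x^{-1}),e)$ actually equals $(e,s)$. The correct computation is $(x,e)(e,s)(x^{-1},e)=(x,s)(x^{-1},e)=(x\,s(x)^{-1},s)$.
\end{itemize}
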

\begin{proof}
    (1) Since $N_0$ is $S$-invariant, $N_1$ contains $N_0$. Since $N_0$ is a normal subgroup of $N$, $N_1$ is a subgroup. To show that it is $S$-invariant,  take $x\in N_1$ and $\sigma,\tau\in S$; we wish to show that $\sigma(\tau(x))\tau(x)^{-1}\in N_0$. As $N_0$ is invariant under $\tau$, this is equivalent to showing that $(\tau^{-1}\sigma\tau)(x)x^{-1}\in N_0$ but this is given by definition.

    (2) We use the following straightforward identity, whose proof we leave to the reader: If $A\rtimes B$ is a semidirect product then $A\cap N_{A\rtimes B}(B)=C_A(B)$, where we identify $A$ and $B$ with their corresponding subgroups of $A\rtimes B$.

    As $N_0$ is $S$-invariant, $N_0\trianglelefteq G$ and  $G/N_0=(N/N_0)\rtimes S$. Applying the above identity, $(N/N_0)\cap N_{G/N_0}(S)=C_{N/N_0}(S)$ and so $N_{G/N_0}(S)=C_{N/N_0}(S)\rtimes S$. By definition, $C_{N/N_0}(S)=N_1/N_0$ so after lifting $N_G(N_0\rtimes S)=N_1\rtimes S$.
\end{proof}

The previous lemma can be used to translate group actions into normalizer towers. 

\begin{definition}\label{D:tower}
    Let $N$ be an abelian group and $S$ a group acting on $N$ by automorphisms. For $N_0\leq N$ an $S$-invariant subgroup, we denote inductively on ordinals, $N_{\alpha+1}=\{x\in N:\forall \sigma\in S,\,\sigma(x)x^{-1}\in N_\alpha\}$ and if $\alpha$ is a limit ordinal set $N_\alpha=\bigcup_{\beta<\alpha} N_\beta$. 

    We call the sequence $(N_\alpha)_{\alpha\in \mathrm{Ord}}$ the \emph{$S$-normalizer tower of $N_0$ inside $N$}. We say that this tower terminates after $\alpha$ steps if $N_{\alpha}=N_{\alpha+1}$ but $N_\beta\subsetneq N_{\beta+1}$ for any $\beta<\alpha$.
\end{definition}

\begin{remark}\label{R: remarks on invariants tower}
    \begin{enumerate}
        \item By induction, $N_{\alpha}\subseteq N_{\alpha+1}$, for all $\alpha$.
        \item Also by induction, each $N_\alpha$ is $S$-invariant.

        \item If $S=\langle X\rangle$ is generated by some subset $X$ then for each $\alpha$, $N_{\alpha+1}=\{x\in N:\forall \sigma\in X,\,\sigma(x)x^{-1}\in N_\alpha\}.$ Indeed, we use the fact that $(\sigma\tau)(x)x^{-1}=\sigma(\tau(x)x^{-1})\sigma(x)x^{-1}$ (so it is a product of two elements from $N_\alpha$ by (2)) and that $\sigma^{-1}(x)x^{-1}=\sigma^{-1}((\sigma(x)x^{-1})^{-1})$.
        \item For any ordinal $\beta$, we have  $N_{\beta+1}/N_\beta=(N/N_\beta)^S$, where the latter is the subgroup of $S$-invariants. 
    \end{enumerate}
\end{remark}

The next proposition explains the choice of the name ``$S$-normalizer tower''.

\begin{proposition}\label{P:from action to automorphism}
Let $S$ be a discrete group of cardinality $\kappa$ and $N$ a closed abelian subgroup of $\mathrm{Sym}(\kappa)$ with $S$ acting continuously by automorphisms on $N$. Let $N_0\leq N$ be an  $S$-invariant subgroup of cardinality at most $\kappa$. 
    
    If the $S$-normalizer tower of $N_0$ inside $N$, $(N_\alpha)_{\alpha\in \mathrm{Ord}}$, terminates after $\alpha$ steps then there exists a centerless group $L$ of cardinality $\kappa$ with $\tau(L)=\alpha$. 
\end{proposition}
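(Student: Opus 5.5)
The plan is to reduce to Fact \ref{F: normalizer to automorphism}. We take $G=N\rtimes S$ and $H_0=N_0\rtimes S$, and show that the normalizer tower of $H_0$ inside $G$ is exactly $(N_\beta\rtimes S)_\beta$.

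First, by Fact \ref{F:semidirect of nonarchPolish}, $G$ is (isomorphic to) a closed subgroup of $\mathrm{Sym}(\kappa)$. This uses that $S$ is discrete of cardinality $\kappa$, that $N$ is closed in $\mathrm{Sym}(\kappa)$, and that the action is continuous. Moreover $|H_0|=|N_0|\cdot|S|\leq\kappa$, so the hypotheses of Fact \ref{F: normalizer to automorphism} are satisfied for the pair $H_0\leq G$.

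Next we prove, by induction on $\beta$, that $H_\beta=N_\beta\rtimes S$, where $(H_\beta)$ is the normalizer tower of $H_0$ in $G$.
\begin{itemize}
\item The base case $\beta=0$ is the definition of $H_0$.
\item For the successor step, assume $H_\beta=N_\beta\rtimes S$. Since $N$ is abelian, $N_\beta$ is normal in $N$, and it is $S$-invariant by Remark \ref{R: remarks on invariants tower}(2). Lemma \ref{L: one step}(2), applied with $N_\beta$ in the role of $N_0$, then gives $N_G(N_\beta\rtimes S)=N'\rtimes S$. Here $N'=\{x\in N:\forall\sigma\in S,\ \sigma(x)x^{-1}\in N_\beta\}$, which is $N_{\beta+1}$ by Definition \ref{D:tower}. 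Hence $H_{\beta+1}=N_{\beta+1}\rtimes S$.
\item At limit stages both towers are unions, and $\bigcup_{\gamma<\beta}(N_\gamma\rtimes S)=(\bigcup_{\gamma<\beta}N_\gamma)\rtimes S$, since every element of $G$ is uniquely of the form $(n,s)$.
\end{itemize}

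It follows that $H_\beta=H_{\beta+1}$ if and only if $N_\beta=N_{\beta+1}$, and likewise for strict inclusions. So the normalizer tower of $H_0$ in $G$ terminates after exactly $\alpha$ steps, and Fact \ref{F: normalizer to automorphism} yields a centerless $L$ of cardinality $\kappa$ with $\tau(L)=\alpha$.

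There is no real obstacle. The only care needed is in the successor step of the induction: we must check that each $N_\beta$ satisfies the normality and $S$-invariance hypotheses of Lemma \ref{L: one step}, and that the set $N_1$ produced by the lemma coincides with $N_{\beta+1}$ from Definition \ref{D:tower}. Normality is where abelianness of $N$ is used, and $S$-invariance is Remark \ref{R: remarks on invariants tower}(2).
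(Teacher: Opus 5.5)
Your proposal is correct and follows the paper's proof: take $G=N\rtimes S$, which is closed in $\mathrm{Sym}(\kappa)$ by the semidirect-product fact, and $H_0=N_0\rtimes S$. You then identify the normalizer tower of $H_0$ with $(N_\beta\rtimes S)_\beta$ via the one-step lemma and apply the normalizer-to-automorphism-tower fact, spelling out the bound $|H_0|\le\kappa$, the limit step, and the transfer of the termination condition, which the paper leaves implicit.
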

\begin{proof}
    Consider the group $G=N\rtimes S$, which is a closed subgroup of $\mathrm{Sym}(\kappa)$, and set $H_0=N_0\rtimes S$. By Remark \ref{R: remarks on invariants tower}(2), each $N_\alpha$ is $S$-invariant and is normal since $N$ is abelian. By Lemma \ref{L: one step}, $N_G(N_\alpha\rtimes S)=N_{\alpha+1}\rtimes S$. Setting $H_\alpha=N_\alpha\rtimes S$ for each $\alpha\in \mathrm{Ord}$, we obtain the normalizer tower of $H_0$ inside $G$. Finally, Fact \ref{F: normalizer to automorphism} yields the desired  centerless group.
\end{proof}

Before giving our example and thus proving the main theorem, we prove an upper bound for this procedure.

\begin{lemma}\label{L:discrete-constant above omega1}
    Let $S$ be a discrete group and $N$ a closed abelian subgroup of $\mathrm{Sym}(\kappa)$ with $S$ acting continuously by automorphisms on $N$. Let $N_0\leq N$ be an  $S$-invariant subgroup and $( N_\alpha)_{\alpha\in \mathrm{Ord}}$ its corresponding  $S$-normalizer tower.    If $|S|\leq \kappa$ then $N_{\kappa^+ +1}=N_{\kappa^+}$.
\end{lemma}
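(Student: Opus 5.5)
The plan is a cofinality argument: membership in $N_{\alpha+1}$ is witnessed by at most $|S|\leq\kappa$ conditions, while $\kappa^+$ is a regular limit ordinal, so any $\kappa$ ordinals below it are bounded below it. The topological hypotheses (closedness of $N$, continuity of the action) will not be needed; only $|S|\leq\kappa$ and the definition of the $S$-normalizer tower are used.

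First I record that the tower is increasing: $N_\beta\subseteq N_\gamma$ whenever $\beta\leq\gamma$. This follows by induction on $\gamma$. At successor steps it is Remark \ref{R: remarks on invariants tower}(1). At limit steps it holds because $N_\gamma$ is defined as the union of all earlier $N_\beta$. In particular $N_{\kappa^+}\subseteq N_{\kappa^++1}$, so only the reverse inclusion needs proof.

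Now take $x\in N_{\kappa^++1}$. By definition, for every $\sigma\in S$ we have $\sigma(x)x^{-1}\in N_{\kappa^+}$. Since $\kappa^+$ is a limit ordinal, $N_{\kappa^+}=\bigcup_{\beta<\kappa^+}N_\beta$. So for each $\sigma\in S$ we can choose $\beta_\sigma<\kappa^+$ with $\sigma(x)x^{-1}\in N_{\beta_\sigma}$.

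Because $|S|\leq\kappa$ and $\kappa^+$ is regular, the ordinal $\beta=\sup_{\sigma\in S}\beta_\sigma$ is still $<\kappa^+$. By monotonicity, $\sigma(x)x^{-1}\in N_\beta$ for every $\sigma\in S$. Hence $x\in N_{\beta+1}$ by definition. Since $\beta+1<\kappa^+$, this gives $x\in N_{\kappa^+}$.

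There is no genuine obstacle here. The only point needing care is the regularity of $\kappa^+$ (which uses choice, as is standard); that is exactly where the hypothesis $|S|\leq\kappa$ enters.
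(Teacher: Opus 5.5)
Your proof is correct and is essentially the paper's argument: choose $\beta_\sigma<\kappa^+$ for each $\sigma\in S$, use $|S|\leq\kappa$ and the regularity of $\kappa^+$ to bound them by some $\beta<\kappa^+$, and conclude $x\in N_{\beta+1}\subseteq N_{\kappa^+}$. The only difference is that you spell out the monotonicity of the tower, which the paper uses implicitly.
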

\begin{proof}
   Let $x\in N_{\kappa^+ +1}\setminus N_{\kappa^+}$. For any $\sigma\in S$, $\sigma(x)x^{-1}\in N_{\kappa^+}$, so there is some $\alpha_\sigma<\kappa^+$ such that $\sigma(x)x^{-1}\in N_{\alpha_\sigma}$. Let $\alpha=\sup_{\sigma\in S} \alpha_{\sigma}$; because $|S|\leq \kappa<\mathrm{cf}(\kappa^+)=\kappa^+$ one has $\alpha<\kappa^+$ and $\sigma(x)x^{-1}\in N_\alpha$ for all $\sigma \in S$. But then $x\in N_{\alpha+1}\subseteq N_{\kappa^+}$.
\end{proof}

The following is well known:
\begin{fact}\label{F:power as first order}
    Let $M$ be a first order structure and $S$ a set. Then $\aut(M)^S$, with the pointwise convergence topology,  is isomorphic as a topological group to the automorphism group of the structure $\coprod_{s\in S}M$ together with $S$-many unary predicates for the different copies of $M$.
\end{fact}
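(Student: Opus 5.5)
The plan is to write down the obvious isomorphism and check that it is a homeomorphism. Let $M^\ast=\coprod_{s\in S}M$, where we write $M_s=M\times\{s\}$ for the $s$-th copy. Equip it with the language of $M$, interpreted copywise: each relation symbol $R$ is read as $\bigcup_s R^{M_s}$, and the unary predicates $P_s$ name the copies. If the language has function or constant symbols, we first replace them by their graphs. Otherwise the disjoint union would need cross-copy values, and this replacement changes neither $\aut(M)$ nor its topology.

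Define $\Phi\colon \aut(M)^S\to \aut(M^\ast)$ by
\[\Phi\big((f_s)_{s\in S}\big)(a,s)=(f_s(a),s).\]
There are three algebraic checks.
\begin{enumerate}
\item $\Phi$ is well defined. The map $\Phi((f_s))$ is a bijection that preserves each $P_s$. Since every relation of $M^\ast$ is a union of relations living inside single copies, it is preserved exactly because each $f_s$ is an automorphism.
\item $\Phi$ is a group homomorphism, and it is injective; both are immediate.
\item $\Phi$ is surjective. Any $g\in\aut(M^\ast)$ must preserve each $P_s$, so it restricts to a bijection of $M_s$. That restriction preserves the $M$-relations, so it is some $f_s\in\aut(M)$, and then $g=\Phi((f_s))$.
\end{enumerate}

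For the topology, compare basic neighbourhoods of the identity. In $\aut(M)^S$ with the product of the pointwise topologies, a basic neighbourhood is given by finitely many indices $s_1,\dots,s_n$ and finite sets $A_i\subseteq M$; it consists of the tuples $(f_s)$ with $f_{s_i}$ fixing $A_i$ pointwise for each $i$. Its image under $\Phi$ is exactly the pointwise stabiliser in $\aut(M^\ast)$ of the finite set $\bigcup_i A_i\times\{s_i\}$. Conversely, every finite subset of $M^\ast$ has this form. So $\Phi$ carries a neighbourhood base of the identity onto a neighbourhood base of the identity. Since both are topological groups and $\Phi$ is an isomorphism, it is a homeomorphism.

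The only point needing care is the topology. The phrase ``pointwise convergence topology'' on $\aut(M)^S$ should be read as the product of the pointwise topologies, which means pointwise convergence in each coordinate. With that reading it is the same as pointwise convergence of maps on $M^\ast$, and the argument above applies.
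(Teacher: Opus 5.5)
Your proof is correct. The paper gives no argument of its own and cites the fact as well known; your coordinatewise map $\Phi$, with basic product neighbourhoods matched to pointwise stabilisers of the finite sets $\bigcup_i A_i\times\{s_i\}$, is exactly the standard verification. Replacing function and constant symbols by their graphs is a sensible precaution that the paper's phrasing leaves implicit, and it is harmless here because the paper's applications can use relational structures.
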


\begin{lemma}\label{L:enlarge to polish}
    Let $V$ and $P$ be groups, with $P$ a closed subgroup of $\mathrm{Sym}(\kappa)$, $\iota_0:V\hookrightarrow P$ a group embedding and $S$ a discrete group of cardinality at most $\kappa$ acting on $V$ by automorphisms. There exists a closed subgroup $G\leq\mathrm{Sym}(\kappa)$ together with a \emph{continuous} action of $S$ on $G$ by automorphisms, and an $S$-equivariant group embedding $\iota: V\hookrightarrow G$. Moreover, if $P$ is abelian then so is $G$.
\end{lemma}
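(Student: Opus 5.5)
Take $G=P^S$, the group of all functions $f\colon S\to P$ with pointwise multiplication and the product topology. By Fact \ref{F:power as first order}, writing $P=\aut(M)$ for a structure $M$ of cardinality $\kappa$ in a language of size at most $\kappa$, we get $P^S\cong\aut\bigl(\coprod_{s\in S}M\bigr)$. This disjoint union has cardinality $|S|\cdot\kappa=\kappa$, and the enlarged language (with $S$-many new unary predicates) still has size at most $\kappa$. So $G$ is isomorphic to a closed subgroup of $\mathrm{Sym}(\kappa)$. If $S$ is finite or empty, replace $S$ by $S\sqcup\kappa$ as an index set, or simply note that the cardinality bound is automatic. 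If $P$ is abelian then $G$ is abelian, since the operations are pointwise.

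Let $S$ act on $G$ by left-regular shift: $(\sigma\cdot f)(s)=f(\sigma^{-1}s)$. Each $\sigma$ acts as a group automorphism of $G$, because it only permutes coordinates. For each fixed $\sigma$ the map $f\mapsto\sigma\cdot f$ is a homeomorphism, since it permutes the coordinates of a product. As $S$ is discrete, this gives continuity of the action $S\times G\to G$. Through the isomorphism of Fact \ref{F:power as first order}, the shift corresponds to permuting the copies of $M$, but we only need the topological statement.

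Define the embedding $\iota\colon V\to G$ by $\iota(v)(s)=\iota_0(s^{-1}\cdot v)$. It is a homomorphism because both $\iota_0$ and each $v\mapsto s^{-1}\cdot v$ are homomorphisms. It is injective by looking at the coordinate $s=e$, where $\iota(v)(e)=\iota_0(v)$ and $\iota_0$ is injective. For equivariance we compute
\[\iota(\sigma\cdot v)(s)=\iota_0(s^{-1}\sigma\cdot v)=\iota_0((\sigma^{-1}s)^{-1}\cdot v)=\iota(v)(\sigma^{-1}s)=(\sigma\cdot\iota(v))(s).\]

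The only delicate points are bookkeeping. One must get the inverses right so that equivariance holds, as in the display above. One must also confirm that the cardinality and language size of $\coprod_{s\in S} M$ stay at most $\kappa$, using the footnote's translation between closed subgroups of $\mathrm{Sym}(\kappa)$ and automorphism groups of such structures. I expect no real obstacle beyond this.
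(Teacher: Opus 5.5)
Your proposal is correct and follows the paper's proof: you take $G=P^S$ via Fact \ref{F:power as first order}, use the Bernoulli shift, set $\iota(v)(s)=\iota_0(s^{-1}\cdot v)$, and verify equivariance with the same computation. Your extra checks (injectivity from the coordinate $s=e$, and joint continuity from discreteness of $S$) fill in details the paper leaves implicit, and your aside about replacing $S$ by $S\sqcup\kappa$ is unnecessary, since $|S|\cdot\kappa=\kappa$ automatically for infinite $\kappa$.
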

\begin{proof}
     Consider $G=P^S$. Since closed subgroups of $\mathrm{Sym}(\kappa)$ are the same as automorphism groups of structures of cardinality $\kappa$ (in a language of cardinality at most $\kappa$), we may apply Fact \ref{F:power as first order} and so $G$ is isomorphic to a closed subgroup of $\mathrm{Sym}(\kappa)$.  If $P$ is abelian then so is $G$. 
     
     There is a natural action of $S$ on $G$ (``Bernoulli shift''): For any $f\in G$ and $t\in S$, $(t\cdot f)(s)=f(t^{-1}s)$. This gives a continuous action of $S$ on $G$ by automorphisms. 
    
    Define $\iota: V\to G$ by $\iota(v)(s)=\iota_0(s^{-1}v)$. It is a group embedding since $S$ acts on $V$ by automorphisms. We show that it is $S$-equivariant, i.e. for every $t\in S$ and $v\in V$, $t\cdot\iota(v)=\iota(t\cdot v)$. For any $s\in S$, $(t\cdot \iota(v))(s)=\iota(v)(t^{-1}s)=\iota_0((t^{-1}s)^{-1}\cdot v)=\iota_0((s^{-1}t)\cdot v)=\iota_0(s^{-1}\cdot(t\cdot v))=\iota(t\cdot v)(s)$.
\end{proof}

The example we give is a translation (and adaptation) of an example given by Hartley \cite[Proposition 3]{hartley}. His goal was to give an example of an uncountable artinian module over a countable ring. 
\begin{proposition}\label{P:Hartley-example}
    Let $F_\kappa$ be the free group on $\kappa$ generators as a discrete group. There exists a closed abelian subgroup $H\leq \mathrm{Sym}(\kappa)$, with a continuous action of $F_\kappa$ on $H$ by automorphisms,  such that for $H_0=\{e\}$ the $F_\kappa$-normalizer tower of $H_0$ inside $H$ terminates after $\kappa^+$ steps. 
\end{proposition}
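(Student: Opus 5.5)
The plan is to build an abstract abelian group $V$ with an $F_\kappa$-action whose tower from $\{0\}$ has length exactly $\kappa^+$, and then move it into a closed abelian subgroup $H\le \mathrm{Sym}(\kappa)$ using Lemma \ref{L:enlarge to polish}. Lemma \ref{L:discrete-constant above omega1} gives the upper bound for free. I write the groups additively.

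\emph{The abstract module.} Let $V$ be the $\FF_2$-vector space with basis $\{x_\gamma:\gamma<\kappa^+\}$, and let $\{g_i:i<\kappa\}$ be the free generators of $F_\kappa$. By the Engelking--Karłowicz theorem there is an \emph{independent} family $\{h_\gamma:\gamma<\kappa^+\}$ of functions $\kappa\to\kappa$. Independence means: for finitely many distinct $\gamma_1,\dots,\gamma_n$ and any prescribed values $v_1,\dots,v_n<\kappa$, some $i$ satisfies $h_{\gamma_k}(i)=v_k$ for all $k$. For $1\le\gamma<\kappa^+$ fix a bijection (or surjection) $e_\gamma:\kappa\to\gamma$, and put $f_\gamma=e_\gamma\circ h_\gamma$. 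Define $g_i$ on the basis by $g_i(x_0)=x_0$ and $g_i(x_\gamma)=x_\gamma+x_{f_\gamma(i)}$ for $\gamma\ge1$. Each $g_i$ is unitriangular with respect to the well-order, so it is an automorphism; its inverse is defined by recursion on $\gamma$. Since $F_\kappa$ is free, this gives an action.

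Let $V_\alpha=\mathrm{span}\{x_\beta:\beta<\alpha\}$. I claim by induction that the $F_\kappa$-normalizer tower of $\{0\}$ in $V$ is exactly $(V_\alpha)$. Limits are clear. At successors, Remark \ref{R: remarks on invariants tower}(3) lets us test only the generators $g_i$. The inclusion $V_{\alpha+1}\subseteq N_{\alpha+1}$ holds because $g_i x_\beta - x_\beta = x_{f_\beta(i)}\in V_\beta$.

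For the reverse inclusion, take $x\notin V_{\alpha+1}$. Write its support as $\beta_1<\dots<\beta_n=\gamma$, so $\gamma>\alpha$. By independence, pick $i$ with $h_\gamma(i)=e_\gamma^{-1}(\alpha)$ and $h_{\beta_k}(i)$ chosen so that $f_{\beta_k}(i)\ne\alpha$ for $k<n$. This is possible since each $e_{\beta_k}$ takes more than one value, or $\beta_k\le\alpha$. Then $g_ix-x=\sum_k x_{f_{\beta_k}(i)}$ has coefficient $1$ at $x_\alpha$, so $g_ix-x\notin V_\alpha$. Hence $x\notin N_{\alpha+1}$. I expect this combinatorial step to be the main obstacle: the same $\kappa$ generators must simultaneously witness all $\kappa^+$ ``descents'', and the independent family is what I would try first to get around it.

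\emph{Transfer to a closed group.} $V$ has cardinality $\kappa^+\le 2^\kappa$, so it embeds as a group into $P=(\mathbb Z/2)^\kappa$. This $P$ is a closed abelian subgroup of $\mathrm{Sym}(\kappa)$ by Fact \ref{F:power as first order}, as the automorphism group of $\kappa$ disjoint two-element structures. Lemma \ref{L:enlarge to polish} then gives a closed abelian $H\le\mathrm{Sym}(\kappa)$ with a continuous $F_\kappa$-action and an equivariant embedding $\iota:V\hookrightarrow H$. Let $(H_\alpha)$ be the tower of $\{e\}$ in $H$. I claim by induction that $H_\alpha\cap\iota(V)=\iota(V_\alpha)$. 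For the successor step, if $\iota(v)\in H_{\alpha+1}$ then for each $\sigma$ we have $\sigma\iota(v)-\iota(v)=\iota(\sigma v-v)\in H_\alpha\cap\iota(V)=\iota(V_\alpha)$, so $v\in V_{\alpha+1}$. The other inclusion holds by equivariance. Consequently $\iota(x_\beta)\in H_{\beta+1}\setminus H_\beta$ for every $\beta<\kappa^+$, so the tower is strictly increasing below $\kappa^+$. Lemma \ref{L:discrete-constant above omega1}, applied with $|F_\kappa|=\kappa$, gives $H_{\kappa^+}=H_{\kappa^++1}$. So the tower terminates after exactly $\kappa^+$ steps.
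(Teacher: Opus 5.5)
Your scaffolding matches the paper's, which follows Hartley. That scaffolding is the $\FF_2$-space on $\kappa^+$ basis vectors with unitriangular generators, the transfer through Lemma \ref{L:enlarge to polish}, the induction $H_\alpha\cap\iota(V)=\iota(V_\alpha)$, and Lemma \ref{L:discrete-constant above omega1} for the upper bound.

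Where you differ is the inclusion $N_{\alpha+1}\subseteq V_{\alpha+1}$. You aim the witness at the coordinate $x_\alpha$ itself, so you must stop the other support elements from also landing on $\alpha$ under the same generator. That is why you reach for an Engelking--Kar\l{}owicz independent family $f_\gamma=e_\gamma\circ h_\gamma$, which is fine in ZFC since $\kappa^+\le 2^\kappa$.

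The paper uses arbitrary surjections $f_\gamma:\kappa\to\gamma$ and aims instead at $\epsilon=\max\{\alpha,\beta_{n-1}\}$ (or $\epsilon=\alpha$ if $n=1$).
\begin{itemize}
\item Since $\epsilon<\beta_n$, there is $i$ with $f_{\beta_n}(i)=\epsilon$.
\item Every other term $x_{f_{\beta_k}(i)}$ has index $<\beta_k\le\beta_{n-1}\le\epsilon$, so $x_\epsilon$ cannot cancel in $g_ix-x$.
\item Since $\epsilon\ge\alpha$, this gives $g_ix-x\notin V_\alpha$.
\end{itemize}
So the ``simultaneous witnessing'' obstacle you anticipated is not really there. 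The independent family adds set-theoretic machinery and buys nothing for this statement.

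As written, your choice of $i$ also has a small hole at $\alpha=0$. You claim that every $e_{\beta_k}$ with $\beta_k>\alpha$ takes more than one value, but $e_1:\kappa\to 1$ takes only the value $0=\alpha$. Take $x=x_1+x_\gamma$ with $\gamma\ge 2$. Any $i$ with $f_\gamma(i)=0$ gives $g_ix-x=x_0+x_0=0$, so the witness you describe does not work. The conclusion $x\notin N_1$ is still true, for example via an $i$ with $f_\gamma(i)=1$, which gives $x_0+x_1\neq 0$.

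Aiming at $\max\{\alpha,\beta_{n-1}\}$ as above repairs this case as well. With that change you can drop the independent family entirely, and the rest of your argument goes through as written.
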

\begin{proof}
    We follow the construction from \cite[Proposition 3]{hartley} (his construction was for $\aleph_1$ but it works for any cardinal). Let $V$ be a vector space over $\FF_2$ with basis $\{v_\alpha :\alpha<\kappa^+\}$. For any $0<\alpha<\kappa^+$ choose a surjection $f_\alpha:\kappa \to \alpha$. 

    Let $\{e_i: i<\kappa\}$ be the generators of $F_\kappa$. Define an action of $F_\kappa$ on the basis elements of $V$:
    \begin{flalign*}
       &  e_i\cdot v_\alpha=v_{f_\alpha(i)}+v_\alpha \text{, for $0<\alpha<\kappa^+$ and}\\
       & e_i\cdot v_0=v_0.
    \end{flalign*}
    Each such $e_i$ extends to a linear transformation of $V$. We show that they act as automorphisms. For each $\alpha$ and $i$, define $\alpha_0=\alpha$ and $\alpha_{n+1}=f_{\alpha_n}(i)$ (as long as $\alpha_n>0$); since $\alpha_{n+1}<\alpha_n$ unless $\alpha_n=0$, there is some integer $n_{i,\alpha}$ for which $\alpha_{n_{i,\alpha}}=0$. Let $W_{\alpha,i}=\mathrm{Span}\{v_{\alpha_0}, v_{\alpha_1},\dots,v_0\}$; it is a finite dimensional $e_i$-invariant subspace, and the matrix representation of $e_i$ with respect to this basis is triangular with $1$'s on the diagonal, so $e_i\restriction W_{\alpha,i}$ is an automorphism. Since $\alpha$ was arbitrary it follows that $e_i$ is both surjective and injective on $V$.
    
    For every $\alpha\leq \kappa^+$, set $V_\alpha=\mathrm{Sp}_{\FF_2}\{v_\beta:\beta<\alpha\}$, so $V_{\kappa^+}=V$.
   
    \begin{claim}
        Let $L_0=\{0\}$, $L_{\alpha+1}=\{x\in V:(e_i-1)x\in L_\alpha,\, \forall i\}$ and at limit stages we take a union. Then $L_\alpha=V_\alpha$ for every $\alpha<\kappa^+$; in particular $L_{\kappa^+}=V$.
    \end{claim}
    \begin{claimproof}
        We prove by induction, only the successor stage is required; assume that $L_\alpha=V_\alpha$.

        For the first direction, we show that $v_{\alpha}\in L_{\alpha+1}$. For that we need to show that for every $i$, $(e_i-1)v_{\alpha}\in L_\alpha$. For $\alpha=0$, $(e_i-1)v_0=0\in L_0$ for every $i$ and hence $v_0\in L_1$. For $\alpha>0$, $(e_i-1)v_\alpha=v_{f_{\alpha}(i)}\in V_\alpha=L_\alpha$ and therefore $v_\alpha\in L_{\alpha+1}$. Thus $L_{\alpha+1}$ contains $V_{\alpha+1}$.
        
        For the other direction, let $v=v_{\beta_1}+\dots+v_{\beta_k}$ be an element of $L_{\alpha+1}$ and assume that $\beta_1<\dots<\beta_k$. Assume towards contradiction that $\alpha <\beta_{k}$. Let $i<\kappa$ be such that $f_{\beta_k}(i) =\epsilon := \max\{\alpha,\beta_{k-1}\}$ (or just $\epsilon=\alpha$, if $k=1$). Then $(e_i-1) v = v_{\epsilon}+v_{f_{\beta_{k-1}}(i)} +\ldots +v_{f_{\beta_1}(i)}$, where terms with $\beta_j=0$ contribute $0$. Note that for any $j<k$ with $\beta_j>0$, $f_{\beta_j}(i)<\beta_j\leq \beta_{k-1}\leq \epsilon$ so the vectors after $v_{\epsilon}$ have smaller ordinal indices. But since $(e_i -1)v \in L_{\alpha} = V_{\alpha}$ this means that $\epsilon < \alpha$, contradiction.
      
    \end{claimproof}

    By Remark \ref{R: remarks on invariants tower}(3), $(L_\alpha)_{\alpha\in \mathrm{Ord}}$ is exactly the $F_\kappa$-normalizer tower of $L_0$ inside $V$. 
    
    Since $\dim_{\FF_2}V=\kappa^+$, we may embed the abelian group $V$ into the group $2^\kappa$: Indeed, the latter, seen as the $\FF_2$-vector space $(\FF_2)^\kappa$, has dimension $2^{\kappa}$ over $\FF_2$, so there is a linear embedding of $V$ into it. By Fact \ref{F:power as first order},  $2^\kappa$ is the automorphism group of a structure of cardinality $\kappa$ in a language of cardinality $\kappa$ (the group with two elements is the automorphism group of the structure with universe $2=\{0,1\}$ in the language of pure equality).
    
    By Fact \ref{L:enlarge to polish}, there exists an $F_\kappa$-equivariant embedding $\iota:V\hookrightarrow  H$, where $H$ is  an abelian closed subgroup of $\mathrm{Sym}(\kappa)$, and $F_\kappa$ acts on $H$ continuously by automorphisms.

    Let $\{0\}=H_0\leq H$ and let $(H_{\alpha})_{\alpha\in \mathrm{Ord}}$ be the  $F_\kappa$-normalizer tower  of $H_0$ inside $H$.
    
    \begin{claim}
        For any $\alpha\leq \kappa^+$, $\iota(V_\alpha)=H_\alpha\cap \iota(V)$.
    \end{claim}
    \begin{claimproof}
        We show by induction. Assume it is known for $\alpha$ and we show for $\alpha+1$ (the limit stage is obvious since $\iota$ is an embedding).

        For any $v\in V$, $\iota(v)\in H_{\alpha+1}$ if and only if for every $s\in F_\kappa$, $s\cdot \iota(v)-\iota(v)\in H_\alpha$. As $\iota$ is equivariant, it is equivalent to: For every $s\in F_\kappa$, $\iota(s\cdot v-v)\in H_\alpha$. By the induction hypothesis, this is equivalent to $s\cdot v-v\in V_\alpha$, i.e. $v\in V_{\alpha+1}$.
    \end{claimproof}

    We conclude that $H_{\alpha}\subsetneq H_{\alpha+1}$ for any $\alpha<\kappa^+$. By Lemma \ref{L:discrete-constant above omega1}, $H_{\kappa^++1}=H_{\kappa^+}$.
\end{proof}

\begin{theorem}\label{T:main}
    There exists a centerless group $G$ of cardinality $\kappa$ with $\tau(G)=\kappa^+$.
\end{theorem}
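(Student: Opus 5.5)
The plan is to combine Proposition \ref{P:Hartley-example} with Proposition \ref{P:from action to automorphism}; almost all the work has already been done.

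First, apply Proposition \ref{P:Hartley-example} with $S=F_\kappa$, the free group on $\kappa$ generators regarded as a discrete group. This gives a closed abelian subgroup $H\leq \mathrm{Sym}(\kappa)$ on which $F_\kappa$ acts continuously by automorphisms. With $H_0=\{e\}$, the $F_\kappa$-normalizer tower of $H_0$ inside $H$ terminates after exactly $\kappa^+$ steps.

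Second, check the hypotheses of Proposition \ref{P:from action to automorphism}:
\begin{itemize}
\item $F_\kappa$ is discrete of cardinality $\kappa$, since $\kappa$ is infinite.
\item $H$ is a closed abelian subgroup of $\mathrm{Sym}(\kappa)$ and the action is continuous.
\item $H_0=\{e\}$ is trivially $F_\kappa$-invariant and has cardinality $1\leq\kappa$.
\end{itemize}
The proposition then yields a centerless group $G$ of cardinality $\kappa$ with $\tau(G)=\kappa^+$, which is the theorem.

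The one point to double-check is that ``terminates after $\kappa^+$ steps'' in Proposition \ref{P:Hartley-example} has exactly the meaning of Definition \ref{D:tower}. That definition requires two things:
\begin{itemize}
\item $H_\beta\subsetneq H_{\beta+1}$ for all $\beta<\kappa^+$. This comes from the second claim in that proof, $\iota(V_\beta)=H_\beta\cap\iota(V)$, together with $V_\beta\subsetneq V_{\beta+1}$.
\item $H_{\kappa^+}=H_{\kappa^++1}$. This is Lemma \ref{L:discrete-constant above omega1}, since $|F_\kappa|\leq\kappa$.
\end{itemize}
The meaning then transfers to $\tau$ through Fact \ref{F: normalizer to automorphism}, which is where the real content lies, and that fact is assumed from \cite{JuShTh}. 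Nothing beyond this bookkeeping should be an obstacle.
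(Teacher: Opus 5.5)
Your proposal is correct and matches the paper's proof: the theorem follows by feeding the tower of Proposition~\ref{P:Hartley-example} into Proposition~\ref{P:from action to automorphism}. Your extra check that the tower terminates after exactly $\kappa^+$ steps in the sense of Definition~\ref{D:tower} is also right. Strictness below $\kappa^+$ comes from the second claim together with $v_\beta\in V_{\beta+1}\setminus V_\beta$, and stabilization comes from Lemma~\ref{L:discrete-constant above omega1}; both are already established in the paper's proof of Proposition~\ref{P:Hartley-example}.
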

\begin{proof}
    Combining Proposition \ref{P:Hartley-example} and Proposition \ref{P:from action to automorphism}, there exists a centerless group $G$ of cardinality $\kappa$ with $\tau(G)=\kappa^+$.
\end{proof}
\bibliographystyle{alpha}
 \bibliography{tauG}

\end{document}